\newtheorem{proposition}{Proposition}[section]
\newtheorem{theorem}[proposition]{Theorem}
\numberwithin{equation}{section}
\def\Q{{\mathbb Q}}
\def\Z{{\mathbb Z}}
\def\gcd{\mathrm{gcd}}
\begin{document}

\baselineskip=17pt

\title{A note on the Diophantine equations $x^{2}\pm5^{\alpha}\cdot p^{n}=y^{n}$}
\author{
G\"{O}KHAN~SOYDAN
}


\date{} 

\maketitle

\begin{abstract}
Suppose that $x$ is odd, $n\geq7$ and $p\notin\{2,5\}$ are primes. In this paper, we prove that the Diophantine equations $x^{2}\pm5^{\alpha}p^{n}=y^{n}$ have no solutions in positive integers $\alpha,x,y$ with $gcd(x,y)=1$.
\end{abstract}

\renewcommand{\thefootnote}{}

\footnote{2010 \emph{Mathematics Subject Classification}: 11D61.}

\footnote{\emph{Key words and phrases}: Exponential Diophantine equation, 
Frey curve.}

\footnote{This work was supported by the Research Fund of Uluda\u{g} University under project numbers: 2015/23, 2016/9.}

\renewcommand{\thefootnote}{\arabic{footnote}}

\setcounter{footnote}{0}

\section{Introduction}
            \label{sec intro}
The Diophantine equation
\begin{equation}\label{eq.1}
x^{2}+B=y^{n}, \quad x,y\geq1, \quad n\geq 3,
\end{equation}
where $B$ is a product of at least two prime powers were studied in some recent papers. First we assume that $q$ is an odd prime. All solutions of the Diophantine equation \eqref{eq.1} where $B=2^{a}q^{b}$ were given in \cite{Lu} for $q=3$, in \cite{LT1} for $q=5$, in \cite{CDLPS} for $q=11$, in \cite{LT2} for $q=13$, in \cite{Dab} for $q=17,29,41$ , in \cite{SUZ} for $q=19$. Next assume that $q$ is a general odd prime. In \cite{ZLST}, Zhu, Le, Soydan and T\'{o}gbe gave all the solutions of the equation 
$
x^{2}+2^{a}q^{b}=y^{n}, \; x\geq1, y>1,\; \gcd(x,y)=1,\; a\geq0, b>0, \; n\geq3
$
under some conditions.

Many authors also considered the Diophantine equation \eqref{eq.1} where $B$ is a product of at least two distinct odd primes. The cases $B=5^{a}13^{b}$ and $B=5^{a}17^{b}$ when $x$ and $y$ are coprime were solved completely in \cite{LMT} and \cite{PR}, respectively. In 2010, the complete solution $(n,a,b,x,y)$ of the Diophantine equation \eqref{eq.1} for the case $B=5^{a}11^{b}$ when $gcd(x,y)=1,$ except for the case when $abx$ is odd, was given by Cangul, Demirci, Soydan and Tzanakis, \cite{CDST}. Six years later, the remaining case of the Diophantine equation \eqref{eq.1} for the case $B=5^{a}11^{b}$ were covered by Soydan and Tzanakis, \cite{ST}. All solutions of the Diophantine equation \eqref{eq.1} for the cases $B=7^{a}11^{b}$ -except for the case when $ax$ is odd and $b$ is even-, $B=11^{a}17^{b}$, $B=2^{a}5^{b}13^{c}$, $B=2^{a}3^{b}11^{c}$, $B=2^{a}5^{b}17^{c}$ and $B=2^{a}3^{b}17^{c}$ - $2^{a}13^{b}17^{c}$ can be found in \cite{S1}-\cite{S2}, \cite{BP}, \cite{GLT}, \cite{CDILS}, \cite{GMT1} and \cite{GMT2}, respectively. In \cite{P}, Pink gave all the non-exceptional solutions of the equation \eqref{eq.1} (according to terminology of that paper) for the case $B=2^{a}3^{b}5^{c}7^{d}$. For a survey concerning equation \eqref{eq.1} see \cite{BP}, \cite{AB}.

Now we assume that $n\geq7$ and $p\notin\{2,5\}$ are primes. Here we consider the Diophantine equations
\begin{equation} \label{eq.2}
x^2+5^\alpha p^n=y^n  
\end{equation}
and
\begin{equation} \label{eq.3}
x^2-5^\alpha p^n=y^n
\end{equation}
where $x,y\geq1$, $\alpha\geq 0$ and $gcd(x,y)=1.$ There are many papers concerning partials solutions for the equations \eqref{eq.2} and \eqref{eq.3}. The known results except the ones mentioned above include the following theorem.
\begin{theorem}\label{theo.1}

$(i)$ Let $p>7$ be an odd prime with $p\not\equiv 7\ (\textrm{mod}\ 8)$ and $(n,h_{0})=1$ where $h_{0}$ denote the class number of the field $\Q(\sqrt{-p})$. Under these conditions if $\alpha=0$, then the equation \eqref{eq.2} has no solutions.
\\$(ii)$ Let $p>2$ be a prime. If $\alpha=0$, then \eqref{eq.3} has no solutions.
\end{theorem}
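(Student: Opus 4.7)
For part $(i)$, the plan is a classical descent in the ring of integers $\O_K$ of $K=\Q(\sqrt{-p})$. First I would verify the basic coprimality: from $\gcd(x,y)=1$ together with $x$ and $p$ odd, one obtains $p\nmid x$, $p\nmid y$, and $y$ even. Since $n$ is odd I would write $p^n=p\cdot(p^{(n-1)/2})^2$ and factor in $\O_K$ as
\[
\bigl(x+p^{(n-1)/2}\sqrt{-p}\bigr)\bigl(x-p^{(n-1)/2}\sqrt{-p}\bigr)=y^n.
\]
The hypothesis $p\not\equiv 7\pmod 8$ is used precisely to control the prime $2$ in $\O_K$: it guarantees that $2$ is either inert (when $p\equiv 3\pmod 8$) or ramified (when $p\equiv 1\pmod 4$), so there is a unique Galois-stable prime $\idp$ above $2$. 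After extracting the common $\idp$-part, the conjugate ideals turn out to be coprime (a common prime would have to lie above $2$ or above $p$, both excluded), and together with $\gcd(n,h_0)=1$ and the fact that the units of $\O_K$ are $\pm 1$ (already ensured by $p>3$) the factorization forces
\[
x+p^{(n-1)/2}\sqrt{-p}=\pm\gamma^n\qquad\text{for some }\gamma\in\O_K.
\]

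I would then expand $\gamma^n$ by the binomial theorem---taking $\gamma=a+b\sqrt{-p}$ if $p\equiv 1\pmod 4$, or $\gamma=(a+b\sqrt{-p})/2$ with $a\equiv b\pmod 2$ if $p\equiv 3\pmod 4$---and equate coefficients of $\sqrt{-p}$. This produces an equation of the shape
\[
p^{(n-1)/2}=\pm b\cdot F(a,b)
\]
(up to a fixed power of $2$ in the second case), where $F$ is an explicit polynomial of degree $n-1$ assembled from the odd-indexed binomial terms. Since $b$ is thereby restricted to a short list of divisors of $p^{(n-1)/2}$ (times a power of $2$), for each admissible $b$ one substitutes back, reduces modulo $p$ to pin down $a$ modulo $p$, and uses size and divisibility estimates that exploit $p>7$ and $n\ge 7$ to rule out the remaining possibilities.

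For part $(ii)$ the equation rearranges to $x^2=y^n+p^n$ with $\gcd(y,p)=1$. The direct elementary route is to factor $x^2=(y+p)\cdot Q(y,p)$, where $Q(y,p)=y^{n-1}-y^{n-2}p+\cdots+p^{n-1}$, observe that $Q(-p,p)=np^{n-1}$ combined with $p\nmid y+p$ forces $\gcd(y+p,Q(y,p))\mid n$, and split into the two cases $\gcd=1$ (so $y+p=s^2$ and $Q(y,p)=t^2$) or $\gcd=n$ (so $y+p=ns^2$ and $Q(y,p)=nt^2$); each is then killed by parity, reduction modulo small primes, or by sandwiching $Q$ between consecutive squares of $y^{(n-1)/2}$-type expressions. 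A more powerful alternative is to notice that $x^2=y^n+p^n$ is the generalized Fermat equation of signature $(n,n,2)$, for which the theorem of Darmon and Merel rules out all primitive solutions as soon as $n\ge 4$, settling $(ii)$ immediately.

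The main obstacle is the last step of $(i)$: one must dispose of the equation $p^{(n-1)/2}=\pm b\cdot F(a,b)$ for \emph{every} admissible $b$, and in particular the borderline case $b=\pm 1$ reduces to the identity $p^{(n-1)/2}=\pm F(a,\pm 1)$, whose impossibility is precisely where the lower bounds $p>7$ and $n\ge 7$ enter in an essential way (smaller values admit exceptional solutions). In the elementary treatment of $(ii)$, the analogous hurdle is proving that $Q(y,p)$ is never a square, respectively never $n$ times a square, for \emph{any} admissible $y$ and $p$; making that argument rigorous uniformly in $y,p$ is the delicate point and typically requires combining size estimates with a carefully chosen modular reduction.
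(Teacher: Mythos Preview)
The paper does not actually prove this theorem: its proof consists solely of the two citations ``See \cite{AA2}'' for part~(i) and ``See \cite{DM}'' for part~(ii). So the question is whether your sketch reconstructs (or replaces) those cited arguments.

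For part~(ii) your ``more powerful alternative'' --- invoking the Darmon--Merel theorem on primitive solutions of $x^n+y^n=z^2$ --- is precisely the reference \cite{DM} that the paper cites, so there you are in complete agreement. Your elementary alternative (factoring $x^2=(y+p)Q(y,p)$ and trying to show $Q$ is never a square or $n$ times a square) is, as you yourself flag, only a plan; in fact no such elementary argument is known to work uniformly, which is why the Darmon--Merel result (relying on modularity and Ribet's level-lowering) is what is actually used.

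For part~(i) your descent in $\O_K$ with $K=\Q(\sqrt{-p})$ is exactly the strategy of Arif and Abu~Muriefah in \cite{AA2}, so you have correctly identified the method behind the citation. Two remarks, however. First, you assume $x$ is odd, but Theorem~\ref{theo.1} carries no such hypothesis (that assumption belongs to Theorem~\ref{title-eq}); the $x$-even case has to be handled as well, and the factorization argument must be adjusted accordingly. Second, and more substantially, the step you label ``the main obstacle'' --- eliminating $p^{(n-1)/2}=\pm b\,F(a,b)$ for every admissible $b$ --- is the entire content of the proof in \cite{AA2}; you have described where the difficulty lies but not supplied an argument. Until that elimination is actually carried out (it requires a careful case analysis and congruence arguments, not merely ``size and divisibility estimates''), the proof of (i) is incomplete.
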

\begin{proof}
$(i)$ See \cite{AA2}.

$(ii)$ See \cite{DM}.
\end{proof}
Our main result is following.
\begin{theorem}\label{title-eq}
Suppose that $x$ is odd, $n\geq7$ and $p\notin\{2,5\}$ are primes. Then the Diophantine equations
\begin{equation} \label{eq.4}
x^2+5^\alpha p^n=y^n  
\end{equation}
and
\begin{equation} \label{eq.5}
x^2-5^\alpha p^n=y^n
\end{equation}
have no solutions in positive integers $\alpha, x , y$ with $gcd(x,y)=1$.  	
\end{theorem}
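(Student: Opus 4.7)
I plan to attack equations \eqref{eq.4} and \eqref{eq.5} in parallel via the modular (Frey-curve) method; the two cases will differ only in signs. Fix a putative solution to one of them. Since $\gcd(x,y)=1$ and $\alpha\geq 1$, any common prime divisor of $y$ and $5p$ would also divide $x$, so $\gcd(y,5p)=1$. Moreover $x^{2}$ and $5^{\alpha}p^{n}$ are both odd, hence $y^{n}=x^{2}\pm 5^{\alpha}p^{n}$ is even and $y$ is even. Reducing modulo $2^{n}$ one obtains
\begin{equation*}
5^{\alpha}p^{n} \equiv \mp 1 \pmod{2^{n}},
\end{equation*}
a strong elementary constraint which already eliminates several residue classes of $p \bmod 8$ and links the parity of $\alpha$ to the remaining classes. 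Writing $\alpha=qn+r$ with $0\leq r<n$ and $P=5^{q}p$, the equations become $x^{2}\pm 5^{r}P^{n}=y^{n}$, placing us in the standard signature $(2,n,n)$ ternary framework with small constant $5^{r}$, $0\leq r<n$.

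To such a solution I associate the Frey curve
\begin{equation*}
E\colon\; Y^{2} \;=\; X^{3}+2xX^{2}+y^{n}X,
\end{equation*}
with invariants $c_{4}(E)=16(4x^{2}-3y^{n})$ and $\Delta(E)=\mp 2^{6}\cdot 5^{\alpha}p^{n}y^{2n}$. Away from $\{2,5,p\}\cup\{q:q\mid y\}$, $E$ has good reduction. At each odd prime $q\mid y$ the reduction is multiplicative with $v_{q}(\Delta)=2n$; at $p$, multiplicative with $v_{p}(\Delta)=n$; at $5$, typically multiplicative with $v_{5}(\Delta)=\alpha$, with a bounded additive contribution in the sub-case $5\mid c_{4}$. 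At $2$ the above model is non-minimal because $y$ is even; a routine $2$-adic change of variables produces a minimal model with $v_{2}(c_{4}^{\min})=2$ and $v_{2}(c_{6}^{\min})=3$, and Tate's algorithm, run in sub-cases indexed by $v_{2}(y)$, the parity of $\alpha$, and $p\bmod 8$, pins the conductor exponent $e_{2}$ to a short explicit list of small values.

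Modularity of $E$ (Wiles, Breuil-Conrad-Diamond-Taylor) together with irreducibility of $\bar\rho_{E,n}$ for $n\geq 7$ are the ingredients needed for Ribet's level-lowering theorem. Irreducibility is obtained by observing that any rational $n$-isogeny of $E$ is restricted by Mazur's isogeny theorem to a short list of $n$, each case being ruled out by the multiplicative reduction of $E$ at $p$, at $5$, or at some odd prime dividing $y$. Ribet's theorem then produces a weight-$2$ cuspidal newform $f$ of trivial character and of level $N_{0}$ obtained from the conductor of $E$ by deleting every prime $q$ with $n\mid v_{q}(\Delta)$. Since $n\mid v_{p}(\Delta)=n$ and $n\mid v_{q}(\Delta)=2n$ for odd $q\mid y$, these primes are stripped; the prime $5$ is stripped as well exactly when $n\mid\alpha$. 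Hence
\begin{equation*}
N_{0}\;\in\;\{\,2^{e_{2}},\;2^{e_{2}}\cdot 5\,\},
\end{equation*}
a short finite list of small levels.

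To conclude, I inspect each of the resulting spaces $S_{2}(\Gamma_{0}(N_{0}))^{\mathrm{new}}$: for most of them the space is trivial and one has an immediate contradiction. At the surviving levels, the finitely many remaining candidate newforms $f$ are eliminated by comparing $a_{q}(E)\equiv a_{q}(f)\pmod{\mathfrak n}$ for a prime $\mathfrak n\mid n$ of the coefficient field of $f$, at small auxiliary primes $q\notin\{2,5,p\}$; the Hasse bound $|a_{q}(E)|\leq 2\sqrt{q}$, combined with $n\geq 7$, quickly forces an incompatibility. The chief technical obstacle throughout is the $2$-adic analysis: because $x$ is forced to be odd and $y$ to be even, $E$ has additive reduction at $2$ and the conductor exponent $e_{2}$ has to be determined by a careful case-by-case application of Tate's algorithm. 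Equation \eqref{eq.5} is handled by precisely the same Frey curve, with only the obvious sign changes in the discriminant and in the mod-$8$ pre-analysis that selects the admissible residues of $p$.
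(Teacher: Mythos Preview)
Your plan is essentially the same modular-method strategy the paper uses: rewrite the equation as a ternary equation $\pm 5^{\alpha}p^{n}+y^{n}=x^{2}$ of signature $(n,n,2)$, attach a Frey curve, and level-lower to a small conductor supported on $\{2,5\}$. The difference is one of packaging. You propose to build the Frey curve $Y^{2}=X^{3}+2xX^{2}+y^{n}X$, run Tate's algorithm at $2$ by hand, verify irreducibility of $\bar\rho_{E,n}$, and then sift the surviving newform spaces by comparing Hecke eigenvalues. The paper bypasses all of this by invoking the ready-made Bennett--Skinner recipe (their Theorem~2.1): since $x$ is odd and $y$ is even with $\mathrm{ord}_{2}(y^{n})\ge n\ge 7$, one is in case~(v), the $2$-minimal Frey curve is $E_{3}:Y^{2}+XY=X^{3}+\tfrac{x-1}{4}X^{2}+\tfrac{y^{n}}{64}X$, and the theorem outputs the level-lowered conductor $N_{n}=2\cdot 5=10$ directly. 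As there are no weight-$2$ newforms of level $10$, the proof ends immediately; no case-by-case $2$-adic analysis and no Fourier-coefficient comparison are needed. Your route would, if carried out correctly, rediscover exactly $e_{2}=1$ and hence the same empty space, so it is correct but considerably more laborious than simply citing \cite{BS}.
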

Here the equation \eqref{eq.4} is an extension of the equation \eqref{eq.1} the cases when $B=5^{a}11^{b}$, $B=5^{a}13^{b}$, $B=5^a17^b$ in \cite{CDST}, \cite{ST}, \cite{LMT}, \cite{PR}, respectively.
\section{Preliminaries}
This section introduces some well known notions and results that will be used to prove the main result.
\subsection{The modular method}
The most important progress in the field of the Diophantine equations has been with Wile's proof of Fermat's Last Theorem \cite{Wiles}. His proof is based on deep results about Galois representations associated to elliptic curves and modular forms. The method of using such results to deal with Diophatine problems, is called the \textit{modular method}. Especially modular method is useful to solve Diophantine equations of the form
\begin{equation*}
ax^p+by^p=cz^p,\quad ax^p+by^p=cz^2,\quad ax^p+by^p=cz^3, ...   (p\, \textrm{prime}).
\end{equation*}
Modular method follows these steps: associate to a (hypotetical) solution of such a Diophantine equation a certain elliptic curve, called a \textit{Frey curve}, with discriminant an explicitly known constant times a $p$-th power. Next (under some technical assumptions) apply Ribbet's level lowering theorem \cite{Ri} to show that Galois representation on the $p$-torsion of the Frey curve occurs from a newform of weight 2 and a fairly small level $N$ say. If there are no such newforms then there are no non-trivial\footnote{A solution to the equation $ax^p+by^p=cz^r$ with $a,b,c \in \Z/\{0\}$, $x,y,z \in \Z$, $p,q,r \in \Z_{\geqslant2}$ is called nontrivial if $xyz\neq0$.} solutions to the original Diophantine equation. 

Now we stop here, since we only need some of these steps of the modular method in this work (For the details concerning modular method see \cite[Chapter 15]{Cohen} and \cite{Si}). 
\subsection{Signature $(n,n,2)$}
Here we follow the paper of Siksek  \cite[Section 14]{Si} and we give recipes for signature $(n,n,2)$ which was firstly described by Bennett and Skinner \cite{BS}. (See also \cite{IK}).

Assume that $n\geq7$ is prime and $a, b, c, A, B$ and $C$ are nonzero integers with $Aa$, $Bb$ and $Cc$ pairwise coprime, satisfying
\begin{equation} \label{eq.2.1}
Aa^n+Bb^n=Cc^2.  
\end{equation}
We suppose that
\begin{equation}\label{eq.2.2}
ord_{r}(A)<n, \quad ord_{r}(B)<n  \quad \textrm{for all primes}  \, r 
\end{equation}
and
\begin{equation*}
C \quad \textrm{is squarefree}.
\end{equation*}

With assumptions and notation as above without loss of generality, we may suppose we are in one of the following situations:

$(i)$ $abABC\equiv 1\ (\textrm{mod}\ 2)$ and $b\equiv -BC\ (\textrm{mod}\ 4)$.

$(ii)$ $ab\equiv 1\ (\textrm{mod}\ 2)$ and either $ord_{2}(B)=1$ or $ord_{2}(C)=1$.

$(iii)$  $ab\equiv 1\ (\textrm{mod}\ 2)$, $ord_{2}(B)=2$ and $C\equiv -bB/4\ (\textrm{mod}\ 4)$.

$(iv)$ $ab\equiv 1\ (\textrm{mod}\ 2)$, $ord_{2}(B)\in\{3,4,5\}$ and $c\equiv C\ (\textrm{mod}\ 4)$.

$(v)$ $ord_{2}(bB^n)\geq6$ and $c\equiv C\ (\textrm{mod}\ 4)$.

In cases $(i)$ and $(ii)$, we will consider the curve
\begin{equation}\label{eq.2.3}
E_{1}(a,b,c): Y^2=X^3+2cCX^2+BCb^nX.
\end{equation}
In cases $(iii)$ and $(iv)$, we will consider
\begin{equation}\label{eq.2.4}
E_{2}(a,b,c): Y^2=X^3+cCX^2+\dfrac{BCb^n}{4}X,
\end{equation}
in case $(v)$,
\begin{equation}\label{eq.2.5}
E_{3}(a,b,c): Y^2+XY=X^3+\dfrac{cC-1}{4}X^2+\dfrac{BCb^n}{64}X.
\end{equation}
These are all elliptic curves defined over $\Q.$

The following theorem \cite[Theo. 16]{Si} summarizes some useful fact about these curves.
\begin{theorem} (Bennett and Skinner, \cite{BS}) \label{theo.2.1}
Let $i=1,2$ or $3$.

$(a)$ The discriminant $\Delta(E)$ of the curve $E=E_{i}(a,b,c)$ is given by
\begin{equation*}
\Delta(E)=2^{\delta_{i}}C^3B^2A(ab^2)^n
\end{equation*}
where
\[
\delta_{i}=\begin{cases}
6 & \mbox{if $i=1$} \\
0 & \mbox{if $i=2$}\\
-12 & \mbox{if $i=3.$}
\end{cases}
\]

$(b)$ The conductor $N(E)$ of the curve $E=E_{i}(a,b,c)$ is given by
\begin{equation*}
N(E)=2^\alpha C^2\prod_{s|abAB} s  \quad (s \, \, \textrm{is odd prime})
\end{equation*}
where
\[
\alpha=\begin{cases}
5 & \mbox{if $i=1$}, \, \mbox{case $(i)$} \\
6 & \mbox{if $i=1$}, \, \mbox{case $(ii)$} \\
1 & \mbox{if $i=2$}, \, \mbox{case $(iii)$}, \, \, ord_{2}(B)=2 \, \, \mbox{and}  \, \, \, b\equiv -BC/4\ (\textrm{mod}\ 4)  \\
2 & \mbox{if $i=2$}, \, \mbox{case $(iii)$}, \, \, ord_{2}(B)=2 \, \, \mbox{and}  \, \, \, b\equiv BC/4\ (\textrm{mod}\ 4)  \\
4 & \mbox{if $i=2$}, \, \mbox{case $(iv)$} \, \, \mbox{and} \, \, ord_{2}(B)=3  \\
2 & \mbox{if $i=2$}, \, \mbox{case $(iv)$}\, \, \mbox{and} \, \, ord_{2}(B)\in \{4,5\} \\
-1 & \mbox{if $i=3$}, \, \mbox{case $(v)$} \, \, \mbox{and} \, \, ord_{2}(Bb^n)=6  \\
0 & \mbox{if $i=3$}, \, \mbox{case $(v)$} \, \, \mbox{and} \, \, ord_{2}(Bb^n)\geq7.  \\
\end{cases}
\]

$(c)$ Suppose that $E=E_{i}(a,b,c)$ does not have complex multiplication (This would follow if we assume that $ab\neq\pm1$). Then $E=E_{i}(a,b,c)\thicksim_n f$ for some newform
$f$ of level
\begin{equation*}
N_{n}=2^\beta C^2\prod_{t|AB} t  \quad (t \, \, \textrm{is odd prime})
\end{equation*}
where
\[
\beta=\begin{cases}
\alpha & \mbox{cases $(i)$-$(iv)$},\\
0 & \mbox{case $(v)$} \, \, \mbox{and} \, \, ord_{2}(B)\neq0,6, \\
1 & \mbox{case $(v)$} \, \, \mbox{and}\, \, ord_{2}(B)=0, \\
-1 &  \mbox{case $(v)$} \, \, \mbox{and}\, \, ord_{2}(B)=6. \\
\end{cases}
\]

$(d)$ The curves $E_{i}(a,b,c)$ have non-trivial 2-torsion.  
\end{theorem}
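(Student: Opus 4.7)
The plan is to apply the modular method via Theorem \ref{theo.2.1}. Suppose for contradiction that $(x,y,\alpha)$ is a solution to \eqref{eq.4} or \eqref{eq.5} with $x$ odd, $\alpha \geq 1$, and $\gcd(x,y)=1$. The equation immediately yields $\gcd(y, 5p) = 1$ (a common prime factor would divide $x^2$, contradicting $\gcd(x,y)=1$), and since $x$ and $5^\alpha p^n$ are both odd, $y^n$ is even and hence so is $y$.

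The next step is to fit the equation into the Bennett--Skinner signature $(n,n,2)$ framework. Writing $\alpha = nq + s$ with $0 \leq s < n$ gives $5^\alpha p^n = 5^s(5^q p)^n$; with $A = \mp 5^s$, $a = 5^q p$, $B = 1$, $b = y$, $C = 1$, and $c = \pm x$ (sign chosen so that $c \equiv 1 \pmod 4$), each equation takes the required form $Aa^n + Bb^n = Cc^2$, and the hypotheses $\mathrm{ord}_r A,\ \mathrm{ord}_r B < n$, $C$ squarefree, and $Aa, Bb, Cc$ pairwise coprime are all immediate. Since $y$ is even while $5, p$ are odd, $ab$ is even, which excludes cases $(i)$--$(iv)$, and to place ourselves in case $(v)$ we must verify $\mathrm{ord}_2(bB^n) = \mathrm{ord}_2(y) \geq 6$. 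Because $n\cdot\mathrm{ord}_2(y) = \mathrm{ord}_2(x^2 \pm 5^\alpha p^n)$ and $n \geq 7$, a $2$-adic analysis of the right-hand side (with $x$ odd) is needed to deduce $\mathrm{ord}_2(y) \geq 6$, or else to dispose separately of $\mathrm{ord}_2(y)\in\{1,\ldots,5\}$.

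Once case $(v)$ is established, Theorem \ref{theo.2.1} supplies a Frey curve $E = E_3(a,b,c)$ (non-CM, since $ab = 5^q p \cdot y \neq \pm 1$) together with a weight-$2$ newform $f$ of level $N_n$ such that $E \sim_n f$. With $\mathrm{ord}_2(B)=0$ yielding $\beta = 1$, and $AB = \mp 5^s$, $C=1$, the level is
\[
N_n \;=\; 2 \cdot \prod_{t \mid AB,\, t \text{ odd prime}} t \;=\; \begin{cases} 2 & \text{if } s = 0, \\ 10 & \text{if } s \geq 1. \end{cases}
\]
Since $X_0(2)$ and $X_0(10)$ both have genus zero, $S_2^{\mathrm{new}}(\Gamma_0(N_n)) = \{0\}$ in either case; no such $f$ can exist, giving the desired contradiction and proving the theorem.

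The main obstacle is securing case $(v)$: the recipe of Section 2.2 only applies when $\mathrm{ord}_2(y) \geq 6$, which is not automatic from $n \geq 7$ alone (we only obtain $\mathrm{ord}_2(y^n) = n\,\mathrm{ord}_2(y) \geq 7$, not $\mathrm{ord}_2(y) \geq 6$ itself). The leftover sub-cases $\mathrm{ord}_2(y) \in \{1,\ldots,5\}$ will require either a careful $2$-adic congruence argument, analysing $x^2 \equiv \mp 5^\alpha p^n \pmod{2^{n\,\mathrm{ord}_2(y)}}$ with $x \equiv \pm 1 \pmod 4$, or a separate treatment via a complementary Frey curve.
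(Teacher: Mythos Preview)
Your overall route is the paper's route: recast as a signature-$(n,n,2)$ equation, land in case $(v)$, apply Theorem~\ref{theo.2.1}(c) to get a newform of level $2$ or $10$, and finish by the absence of such newforms. You are in fact more careful than the paper on one point: by writing $\alpha = nq + s$ and setting $A = \mp 5^{s}$, $a = 5^{q}p$, you ensure $\mathrm{ord}_5(A) < n$ as required by \eqref{eq.2.2}, and you correctly allow for the level $N_n = 2$ when $n \mid \alpha$. The paper simply takes $A = (-5)^{\alpha}$ and asserts $N_n = 10$ without addressing $\alpha \geq n$.

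The ``main obstacle'' you identify, however, is not genuine: it comes from a typo in the case list. The hypothesis of case $(v)$ should read $\mathrm{ord}_2(Bb^{n}) \geq 6$, not $\mathrm{ord}_2(bB^{n}) \geq 6$; note that $Bb^{n}$ is the quantity appearing both in the conductor table of part~(b) and in the coefficient $BCb^{n}/64$ of the Frey curve $E_3$ (which must be a $2$-adic integer). With $B = 1$ this condition becomes $\mathrm{ord}_2(y^{n}) = n\,\mathrm{ord}_2(y) \geq n \geq 7$, which holds automatically as soon as $y$ is even. This is exactly why the paper's proof passes directly from ``$y$ is even, $x \equiv 1 \pmod 4$ and $n \geq 7$'' to case $(v)$ without further argument. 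So your proposed $2$-adic analysis of $\mathrm{ord}_2(y) \in \{1,\dots,5\}$ and the ``complementary Frey curve'' are unnecessary; once you correct the reading of case $(v)$, your argument is complete as written.
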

Finally we give an important result \cite[Theo. 1]{Si} about newforms.
\begin{theorem}\label{theo.2.2}
There are no newforms at levels 1, 2, 3, 4, 5, 7, 8, 9, 10, 12, 13, 16, 18, 22, 25, 28, 60.
\end{theorem}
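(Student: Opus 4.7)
The plan is to fit each of \eqref{eq.4}, \eqref{eq.5} into the Bennett--Skinner signature $(n,n,2)$ shape $Aa^n+Bb^n=Cc^2$ so that Theorem \ref{theo.2.1} produces a Frey curve whose mod-$n$ Galois representation, by the modular method, descends to a newform at a level in the exclusion list of Theorem \ref{theo.2.2}.

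First I would reduce the exponent by writing $\alpha=nq+\alpha_0$ with $0\le\alpha_0<n$ and setting $P:=5^q p$, so that each equation becomes $x^2\pm 5^{\alpha_0}P^n=y^n$ while preserving the coprimality and ord conditions demanded by the recipe. A short parity analysis then sets up case $(v)$. Since $x$ is odd and $5^{\alpha_0}P^n$ is odd, $y$ must be even, and so $\operatorname{ord}_2(y^n)\ge n\ge 7$. Reducing modulo $4$ (using $5\equiv 1$ and $x^2\equiv 1$) forces $p\equiv 3\pmod 4$ in the $+$ case and $p\equiv 1\pmod 4$ in the $-$ case, since otherwise $y^n\equiv 2\pmod 4$, which is impossible for $n\ge 7$. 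After replacing $x$ by $-x$ if necessary, I may further assume $x\equiv 1\pmod 4$.

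I would then apply Theorem \ref{theo.2.1} with $a=P$, $b=y$, $c=x$ and $(A,B,C)=(\mp 5^{\alpha_0},\,1,\,1)$. Since $\operatorname{ord}_2(Bb^n)=n\operatorname{ord}_2(y)\ge 7\ge 6$ and $x\equiv 1\equiv C\pmod 4$, we fall into case $(v)$, which supplies the Frey curve
\[
E_3(P,y,x)\colon\; Y^2+XY=X^3+\tfrac{x-1}{4}X^2+\tfrac{y^n}{64}X.
\]
The triples $(Aa,Bb,Cc)=(\mp 5^{\alpha_0+q}p,\,y,\,x)$ are pairwise coprime because $\gcd(x,y)=1$ and neither $5$ nor $p$ can divide $xy$ (otherwise it would divide both, contradicting $\gcd(x,y)=1$), while $ab=Py\neq\pm 1$, so the non-CM condition in Theorem \ref{theo.2.1}(c) holds. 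With $\operatorname{ord}_2(B)=0$ the theorem yields $\beta=1$, whence the level
\[
N_n=2\cdot C^2\!\!\!\prod_{\substack{t\mid AB\\ t\text{ odd prime}}}\!\!\!t=\begin{cases}10,&\alpha_0\ge 1,\\ 2,&\alpha_0=0,\end{cases}
\]
both of which appear in Theorem \ref{theo.2.2}, producing the desired contradiction and proving the theorem.

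The main delicate point will be keeping the Bennett--Skinner hypotheses intact after the reduction $5^{\alpha}p^n\to 5^{\alpha_0}P^n$: one has to check $\operatorname{ord}_r(A),\operatorname{ord}_r(B)<n$ and the pairwise coprimality of $(Aa,Bb,Cc)$ once $P$ is no longer prime, and verify that the evenness of $y$ combined with $n\ge 7$ really does place us in case $(v)$ rather than any of the sub-cases $(i)$--$(iv)$ (the latter all require $ab$ odd, and so are excluded here).
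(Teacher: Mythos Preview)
Your proposal does not address the stated theorem at all. Theorem~\ref{theo.2.2} is the assertion that there are no (weight-$2$) newforms at the listed levels $1,2,3,4,5,7,8,9,10,12,13,16,18,22,25,28,60$; it is a fact about modular forms, independent of any particular Diophantine equation. What you have written is instead an argument for Theorem~\ref{title-eq} (the main result about $x^2\pm 5^\alpha p^n=y^n$), which \emph{uses} Theorem~\ref{theo.2.2} as an input. Quoting Theorem~\ref{theo.2.2} inside an argument cannot serve as a proof of Theorem~\ref{theo.2.2}.

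In the paper itself, Theorem~\ref{theo.2.2} is not proved but simply cited from Siksek \cite{Si}; the underlying justification is a dimension computation for the spaces $S_2(\Gamma_0(N))$ (for these $N$ the new subspace is zero-dimensional, as one checks from the genus of $X_0(N)$ together with the oldform decomposition, or directly from standard tables). If you wish to supply a proof, that is the content you must provide. Your Frey-curve/level-lowering argument, however carefully executed, is logically downstream of Theorem~\ref{theo.2.2} and cannot establish it.
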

Now we are ready to prove Theorem \ref{title-eq}.
\section{The proof of Theorem  \ref{title-eq}}
First suppose that $(x,y,\alpha,p,n)$ is a solution to \eqref{eq.4} where $x$ is odd, $n\geq7$ and $p\notin\{2,5\}$ are primes. Thus the equation \eqref{eq.4} becomes
\begin{equation} \label{eq.3.1}
(-5)^\alpha p^n+y^n=x^2
\end{equation} 
with $2\nmid\alpha$. We may assume without loss of generality that $x\equiv 1\ (\textrm{mod}\ 4)$. With the notation in \eqref{eq.2.1}, we see that \eqref{eq.3.1} is a ternary equation of signature $(n,n,2)$. We have the following notations which satisfy \eqref{eq.2.2}
\begin{equation*}
A=(-5)^\alpha, \, \, B=1, \, \, C=1,\, \, a=p, \, \, b=y,\, \, c=x. 
\end{equation*}
Since $y$ is even, $x\equiv 1\ (\textrm{mod}\ 4)$ and $n\geq7$, then with the case $(v)$ (in page $4$) we are interested in the following elliptic curve (called a Frey curve)
\begin{equation*}
E_{3}: Y^2+XY=X^3+\dfrac{x-1}{4}X^2+\dfrac{y^n}{64}X.
\end{equation*}
According to the cases $(a)$ and $(b)$ of Theorem \ref{theo.2.1}, we write the discriminant and conductor of this elliptic curve, respectively
\begin{equation*}
\Delta(E_{3})=2^{-12}(-5)(ab^2)^n,\, \, N(E_{3})=\prod_{s|abAB} s=5 \prod_{s|ab} s 
\end{equation*}
where in the last product $s$ is odd prime. With the case $(c)$ of Theorem \ref{theo.2.1} we compute the level $N_{n}=2\prod_{t|AB} t=10$   ($\,t$ prime). But Theorem \ref{theo.2.2} tells us that there is no newform of level $10$. Thus we deduce the equation \eqref{eq.3.1} has no solutions where $x$ is odd, $p\notin\{2,5\}$ and $n\geq 7$ are primes.

For the case $2\mid\alpha$, we can write the equation \eqref{eq.5} as follows.
\begin{equation}\label{eq.3.2}
(-5)^\alpha p^n+y^n=x^2.
\end{equation}
Following same steps as the case $2\nmid\alpha$, we see that \eqref{eq.3.2} has no solutions where $p\notin\{2,5\}$ and $n\geq7$ are primes. So the proof of theorem is completed.

\bigskip 
\begin{tabular}{l}
Department of Mathematics,  \\Uluda\u{g} University,  \\16059 Bursa-TURKEY \\ \\E-mail address: gsoydan@uludag.edu.tr
\end{tabular}       
\end{document}